\newtheorem{lem}{Lemma}
\newtheorem{thm}[lem]{Theorem}
\begin{document}

\title{A Classification of Hyperfocused 12-Arcs
\thanks{Research supported in part by U.S. National Science Foundation grant DMS-1427526 for The Rocky Mountain--Great Plains Graduate Research Workshop in Combinatorics, and Collaboration Grants from the Simons Foundation (\#316262 to Stephen G. Hartke, \#711898 to Jason Williford).}

}


\author{Philip DeOrsey \and Stephen G. Hartke \and Jason Williford}


\institute{P. DeOrsey \at
                Department of Mathematics\\
              Westfield State University \\
              577 Western Ave\\ 
              Westfield, MA 01086 USA\\
              \email{pdeorsey@westfield.ma.edu}           
           \and
           S.G. Hartke \at
              Department of Mathematical and Statistical Sciences\\
              University of Colorado Denver\\
              Campus Box 170\\
              P.O. Box 173364\\
              Denver, Colorado 80217-3364 USA\\
              \email{stephen.hartke@ucdenver.edu}
              \and
              J. Williford \at
              Deparment of Mathematics\\
              University of Wyoming\\
              Dept 3036\\
              1000 E University Ave\\
              Laramie, WY 82071
}

\date{Received: date / Accepted: date}

\maketitle

\begin{abstract}
A $k$-arc in PG($2,q$) is a set of $k$ points no three of which are collinear. A hyperfocused $k$-arc is a $k$-arc in which the $k \choose 2$ secants meet some external line in exactly $k-1$ points. Hyperfocused $k$-arcs can be viewed as 1-factorizations of the complete graph $K_k$ that embed in PG($2,q$). We study the 526,915,620 1-factorizations of $K_{12}$, determine which are embeddable in PG($2,q$), and classify hyperfocused $12$-arcs. Specifically we show if a $12$-arc $\mathcal{K}$ is a hyperfocused arc in PG($2,q$) then $q = 2^{5k}$ and $\mathcal{K}$ is a subset of a hyperconic including the nucleus. 
\keywords{Hyperfocused Arcs, 1-Factorizations, Projective Plane, Secret Sharing Scheme}
\end{abstract}

\section{Introduction}

A {\it secret sharing scheme} is a method of assigning information to participants so that only certain subsets of the participants can combine their information to reveal a secret. In a specific type of $2$-level secret sharing scheme there are two tiers of participants; any two participants in the top tier can combine their information to reveal the secret, but revealing the secret requires three participants from the bottom tier. Additionally, two participants from the bottom tier can be joined by one from the top tier to determine the secret. It is well known that we can use projective geometry to develop such a scheme \cite{Simmons}.  The following is a description of a scheme that uses the incidence of specific planes in PG($4,q$).

Consider two tangent planes $\pi_1$ and $\pi_2$ in PG($4,q$) and allow the secret to be the point $P = \pi_1 \cap \pi_2$. Make it known to all participants that $P$ is in $\pi_1$ while $\pi_2$ remains a secret. Choose a line $\ell$ in $\pi_2$ passing through $P$ and assign points from $\ell$ to participants in tier one so that any two can combine their points, span $\ell$, and determine the secret $P = \ell \cap \pi_1$. For the participants in tier two, choose an arc $\mathcal{K}$ in $\pi_2$ disjoint from $\ell$ and assign points on $\mathcal{K}$ to participants so that any three can combine their points, span $\pi_2$ and determine the secret $P = \pi_1 \cap \pi_2$. In order to allow any two participants from tier two to combine with any one from tier one we must ensure that the points assigned from $\ell$ are not on any secant lines to $\mathcal{K}$, so that the three points span $\pi_2$. Hence, it is to our benefit to pick $\mathcal{K}$ so that the lines secant to $\mathcal{K}$ meet $\ell$ in a minimum number of points.

In order to maximize the assignable pieces of information, Simmons \cite{Simmons} investigated $k$-arcs with the property that all of their secants met some external line in exactly $k$ points. He called these arcs {\it sharply focused sets}, and studied their existence in certain projective planes of small order. In \cite{Holder} Holder continued this investigation and considered the case where the secants of a $k$-arc meet some external line in a set of $k-1$ points, the theoretical minimum. Holder called this type of arc a {\it super sharply focused set}, which was later changed to {\it hyperfocused arc} by Cherowitzo and Holder in \cite{Cherowitzo}.

It is clear that $k$ must be even for a hyperfocused $k$-arc to exist because secant lines must partition the arc. Since a 2-arc is trivially hyperfocused on any line, we say a hyperfocused $k$-arc is non-trivial if $k \geq 4$. Following from a result of Bichara and Korchm{\'a}ros \cite{Korchmaros}, Cherowitzo and Holder showed that if a non-trivial hyperfocused arc exists in PG($2,q$) then $q$ is even. Further, it was shown that if a hyperfocused $k$-arc is not a hyperoval or a hyperoval minus two points, then $k \leq \frac{q}{2}$.

Hyperfocused $k$-arcs were classified for $k = 4,6,8$ by Drake and Keating in \cite{Drake}, where they use the setting of Desarguesian nets, and independently by Cherowitzo and Holder in \cite{Cherowitzo} where they also classified hyperfocused 10-arcs. Cherowitzo and Holder exploited the relationship between the 1-factorizations of $K_k$ and hyperfocused $k$-arcs in order to complete their classifications. Hyperfocused 12-arcs were classified in PG($2,32$) by Faina et al. in \cite{Faina}, where they also showed the non-existence of hyperfocused 14-arcs in PG(2,32). 

We use the perspective of Cherowitzo and Holder to study hyperfocused 12-arcs by examining the 1-factorizations of $K_{12}$.  We show there is a unique 1-factorization of $K_{12}$ which embeds in PG($2,q$), $q = 2^h$, and so there is a unique hyperfocused $12$-arc, which is a subset of a hyperconic. Our method uses a computer search of the non-isomorphic 1-factorizations of $K_{12}$ along with a new necessary condition for a 1-factorization to embed as a hyperfocused arc.

\section{Definitions and Preliminaries}

A $k$-arc in PG($2,q$) is a set of $k$ points no three of which are colinear. It is well known that $k \leq q+1$ if $q$ is
odd and $k \leq q+2$ if $q$ is even. A ($q+1$)-arc is called an {\it oval}, and a ($q+2$)-arc is called a {\it hyperoval}. Based on the result of Cherowitzo and Holder we will restrict our attention to the case when $q$ is even, that is, $q = 2^h$. Recall that a conic is a set of points satisfying an irreducible homogeneous quadratic equation. The {\it nucleus} of an oval is the point where all of its tangent lines intersect. A {\it hyperconic} is a hyperoval consisting of a conic together with its nucleus.

Let $\ell$ be a line external to a $k$-arc $\mathcal{K}$. We say $\mathcal{K}$ is {\it hyperfocused on $\ell$} if the $k \choose 2$ secants of $\mathcal{K}$ meet $\ell$ in exactly $k-1$ points. The line $\ell$ is called the {\it focus line}, and the set of $k-1$ points in which the secants meet $\ell$ is called the {\it focus set}. If we identify the $k$ points of a hyperfocused $k$-arc with the vertices of a complete graph then the secant lines to the arc naturally correspond to the edges of the graph. For each point in the focus set, the secant lines through that point partition $\mathcal{K}$ into pairs. This corresponds to a set of disjoint edges that cover the vertices of the complete graph, which is called a {\it 1-factor}.

Thus, the focus set determines a {\it 1-factorization}, a set of disjoint 1-factors that cover the edges of the graph. Given a 1-factorization $\mathcal{F}$ of a complete graph, we will call that 1-factorization {\it embeddable} if there is a hyperfocused arc in some finite classical plane that determines $\mathcal{F}$. 

A 1-factorization of a complete graph induces an edge coloring of that graph, where each color class is a 1-factor. For convenience we will at times refer to this induced edge coloring to state facts about the 1-factorization. We say that two subgraphs have the same coloring if there is an isomorphism between them that preserves the colors of edges.

In order to determine which 1-factorizations of $K_{12}$ are embeddable in PG($2,q$) we will first develop some necessary conditions that will help us eliminate 1-factorizations that cannot be embedded. To discuss them properly we recall the concept of the diagonal line of a quadrangle ABCD. In PG($2,2^h$) the quadrangle ABCD can be completed to a Fano subplane. The line meeting this subplane external to the quadrangle is known as the {\it diagonal line}. Our first necessary condition comes from Cherowitzo and Holder. We use $C_4$ to denote the cycle on four vertices and $K_4$ to denote the complete graph on four vertices; see \cite{West} for any other graph theory terminology.

\begin{figure}
\centering
\begin{tikzpicture}[scale = 2]

\draw[color = blue, dashed, ultra thick] (0,0) -- (0,1);
\draw[color = red, ultra thick] (0,0) -- (1,0);
\draw[color = red, ultra thick] (0,1) -- (1,1);
\draw[color = blue, dashed, ultra thick] (1,0) -- (1,1);

\draw[color = blue, dashed, ultra thick] (2,0) -- (2,1);
\draw[color = red, ultra thick] (2,0) -- (3,0);
\draw[color = red, ultra thick] (2,1) -- (3,1);
\draw[color = blue, dashed, ultra thick] (3,0) -- (3,1);
\draw[color = green, densely dotted, ultra thick] (2,0) -- (3,1);
\draw[color = green, densely dotted, ultra thick] (2,1) -- (3,0);

\draw [-{Implies},double, line width = 1pt] (1.2,0.5) -- (1.8,0.5);

 \fill (0,0) node[left]{$B$} circle (1.5pt);
 \fill (0,1) node[left]{$A$} circle (1.5pt);
 \fill (1,0) node[right]{$C$} circle (1.5pt);
 \fill (1,1) node[right]{$D$} circle (1.5pt);
 
  \fill (2,0) node[left]{$B$} circle (1.5pt);
 \fill (2,1) node[left]{$A$} circle (1.5pt);
 \fill (3,0) node[right]{$C$} circle (1.5pt);
 \fill (3,1) node[right]{$D$} circle (1.5pt);
 
\end{tikzpicture}
\caption{A graphical illustration of Lemma \ref{lem:c4}.}
\label{fig:C4}
\end{figure}
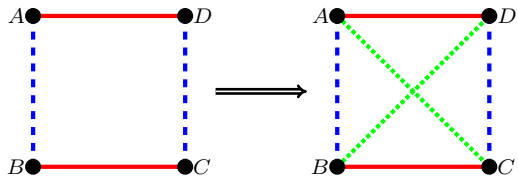

\begin{lem}[\cite{Cherowitzo}]
\label{lem:c4}
Let $\mathcal{F}$ be the 1-factorization obtained from the focus set of a hyperfocused arc in PG$(2,2^h)$. If there are two 1-factors in $\mathcal{F}$ with a $C_4$ in their union, then there must be a unique third 1-factor that completes the $C_4$ to a $K_4$.
\end{lem}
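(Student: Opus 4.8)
The plan is to translate the statement back into PG($2,2^h$) and use the fact, recalled above, that every complete quadrangle there has a diagonal line, i.e.\ its three diagonal points are collinear. Suppose $F_1$ and $F_2$ are the two 1-factors, and label the four vertices of the $C_4$ in their union as $A,B,C,D$ in cyclic order, so that $AB,CD\in F_1$ and $BC,DA\in F_2$; the two edges needed to complete this $C_4$ to a $K_4$ are then $AC$ and $BD$. Viewing $A,B,C,D$ as points of the hyperfocused arc $\mathcal{K}$, they form a quadrangle whose three diagonal points are $P_1=AB\cap CD$, $P_2=AD\cap BC$, and $Q=AC\cap BD$.

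First I would place $P_1$ and $P_2$ on the focus line $\ell$. The 1-factor $F_1$ is associated with some point of the focus set, through which all secants of $F_1$ pass; since $AB$ and $CD$ are distinct lines (as $\mathcal{K}$ is an arc), their unique common point is exactly that focus point, so $P_1\in\ell$, and similarly $P_2\in\ell$. Next I would verify $P_1\ne P_2$: otherwise their common value would lie on $AB$ and on $BC$, hence equal the vertex $B$, contradicting that $\ell$ is external to $\mathcal{K}$. Therefore $\ell$ is the unique line through $P_1$ and $P_2$.

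Now the characteristic-$2$ hypothesis enters: the diagonal points $P_1,P_2,Q$ of the quadrangle $ABCD$ lie on its diagonal line, which must then be $\ell$. So $Q\in\ell$, and since $\mathcal{K}$ is hyperfocused on $\ell$ every secant meets $\ell$ in a point of the focus set; as $Q$ lies on the secants $AC$ and $BD$ and on $\ell$, it is a point of the focus set. Letting $F_3$ be the 1-factor of $\mathcal{F}$ determined by $Q$ — the set of all secants through $Q$ — we get $AC,BD\in F_3$, so $F_1\cup F_2\cup F_3$ induces all six edges on $\{A,B,C,D\}$, i.e.\ $F_3$ completes the $C_4$ to a $K_4$. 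It is a genuine third factor since $AC\in F_3$ but $AC\notin F_1\cup F_2$, and it is the unique such factor because in the 1-factorization $\mathcal{F}$ the edge $AC$ lies in exactly one 1-factor.

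I expect the only delicate point to be the bookkeeping of the middle two paragraphs — in particular verifying $P_1\ne P_2$, which is precisely what forces the diagonal line of the quadrangle to coincide with the focus line $\ell$. Once that is in place, the conclusion follows immediately from the Fano-subplane fact and the definition of a hyperfocused arc.
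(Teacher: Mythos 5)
Your argument is correct and is essentially the paper's own proof: both identify the two diagonal points $AB\cap CD$ and $AD\cap BC$ as points of the focus set, conclude that the diagonal line of the quadrangle $ABCD$ is the focus line $\ell$, and then read off that $AC\cap BD\in\ell$ gives the third 1-factor. Your extra checks (that the two focus points are distinct and that uniqueness follows since $AC$ lies in exactly one 1-factor) are fine details the paper leaves implicit.
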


\begin{proof}
Let $\mathcal{H}$ be a hyperfocused $k$-arc in PG($2,2^h$) that has focus line $\ell$. Let $P$, $Q$ be two points on $\ell$ associated with 1-factors in $\mathcal{F}$ with a $C_4$ in their union. We may assume that $P = AB \cap CD$ and $Q = AD \cap BC$ where $A,B,C,D \in \mathcal{H}$. The diagonal line of the quadrangle ABCD contains both $P$ and $Q$ so must be $\ell$. Hence the point $R = AC \cap BD$ is associated with the desired 1-factor. \qed
\end{proof}

We add the following necessary condition for a 1-factorization to embed in PG($2,2^h$). We make significant use of Desargues' theorem.

\begin{thm}[Desargues' Theorem]
Two triangle are in perspective from a point if and only if they are in perspective from a line.
\end{thm}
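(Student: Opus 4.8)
The plan is to prove Desargues' theorem in the usual way: first lift the planar configuration into a three-dimensional projective space, where the assertion is almost immediate, and then deduce the ``only if'' direction from the ``if'' direction using the principle of duality. Throughout I would view $\mathrm{PG}(2,q)$ as a fixed plane $\pi$ inside $\mathrm{PG}(3,q)$.

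First I would reduce to a single implication. Writing $a=BC$, $b=CA$, $c=AB$ for the sides of the triangle $ABC$ and $a',b',c'$ for those of $A'B'C'$, passing to the dual plane interchanges the statement ``the lines $AA'$, $BB'$, $CC'$ are concurrent'' with the statement ``the points $a\cap a'$, $b\cap b'$, $c\cap c'$ are collinear''; so the two implications of the theorem are dual to one another, and since $\mathrm{PG}(2,q)$ satisfies the principle of duality it suffices to show that perspective from a point implies perspective from a line.

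For the main step, suppose $ABC$ and $A'B'C'$ lie in $\pi$ and are perspective from a point $V\in\pi$, so that $V,A,A'$ are collinear, and similarly for the other two pairs. I would pick a line $m$ through $V$ with $m\not\subseteq\pi$, choose two distinct points $X,Y$ on $m\setminus\{V\}$, and set $A''=XA\cap YA'$; this point exists because $X$, $A$, $Y$, and $A'$ all lie in the plane spanned by $m$ and the line $VAA'$. Defining $B''$ and $C''$ in the same fashion yields a triangle $A''B''C''$ spanning a plane $\pi''\neq\pi$. Now $ABC$ and $A''B''C''$ are perspective from $X$ and lie in \emph{different} planes, and for triangles in different planes the result is easy: $AB$ and $A''B''$ both lie in the plane spanned by the lines $XAA''$ and $XBB''$, hence meet, and the meeting point lies on $\pi\cap\pi''$; the same holds for the other two pairs of sides. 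So $AB\cap A''B''$, $BC\cap B''C''$, $CA\cap C''A''$ all lie on the line $\pi\cap\pi''$. Running this argument with center $Y$ instead puts $A'B'\cap A''B''$, $B'C'\cap B''C''$, $C'A'\cap C''A''$ on $\pi\cap\pi''$ as well. Comparing the two conclusions along the line $A''B''$ forces $AB\cap A'B'=AB\cap A''B''\in\pi\cap\pi''$, and likewise for the other two intersections; thus $AB\cap A'B'$, $BC\cap B'C'$, $CA\cap C'A'$ lie on the line $\pi\cap\pi''$ of $\pi$, which is the desired axis.

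The main obstacle is the bookkeeping of degenerate positions that this ``generic'' argument passes over: one has to verify that, over $\mathbb{F}_q$ and for every $q\geq 2$, the points $X,Y$ can be chosen so that $A'',B'',C''$ are distinct, non-collinear, and not in $\pi$; that the auxiliary line $A''B''$ really differs from $\pi\cap\pi''$ before intersecting the two; and that the corresponding vertices $A,A'$, etc., are distinct (if some pair coincides, or $V$ lies on a side, the configuration collapses and must be treated separately, though it is then easier). An alternative route that avoids all of this is a direct coordinate computation: normalize $V=(1,0,0)$, pick representatives of the six vertices making the three collinearities transparent, compute $AB\cap A'B'$, $BC\cap B'C'$, $CA\cap C'A'$, and check that the $3\times 3$ determinant of their coordinate vectors vanishes; this is routine linear algebra over $\mathbb{F}_q$ and incidentally pinpoints where commutativity of the coordinate field enters.
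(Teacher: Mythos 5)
The paper does not prove Desargues' theorem at all: it is quoted as a classical background fact (like Pascal's theorem later), valid in PG$(2,q)$ precisely because these planes are the classical, Desarguesian ones, and it is then used as a tool in Lemma~\ref{lem:2k4}. So there is no in-paper argument to compare against; judged on its own, your sketch is the standard and correct route. The reduction of the ``only if'' direction to the ``if'' direction via self-duality of PG$(2,q)$ is legitimate, and the lifting argument --- choosing $X,Y$ on a line through $V$ off the plane, forming $A''=XA\cap YA'$ etc., applying the easy non-coplanar case twice, and intersecting along $A''B''$ to force $AB\cap A'B'\in\pi\cap\pi''$ --- is exactly the classical spatial proof. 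The one genuine issue is the one you flag yourself: the choice of $X,Y$ so that $A''B''C''$ spans a plane distinct from $\pi$ and $A''B''\neq\pi\cap\pi''$, together with the degenerate positions (coincident corresponding vertices, $V$ on a side, shared vertices), is not carried out, and for small $q$ (e.g.\ $q=2$, where a line off $\pi$ has only two points besides $V$) this bookkeeping is not entirely cosmetic; the degenerate cases must be disposed of separately, though each is easy. Your fallback of a direct homogeneous-coordinate computation over GF$(q)$ closes that gap cleanly and is arguably the shortest complete argument in this setting; either way, for the purposes of this paper the theorem can simply be cited, as the authors do.
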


\begin{lem}
\label{lem:2k4}
Let $\mathcal{F}$ be the 1-factorization obtained from the focus set of a hyperfocused arc in PG$(2,2^h)$. If there are two disjoint copies of $K_4 -e$ with the same coloring (see Figure \ref{fig:K4}), then the remaining edge of each $K_4$ both have the same color as well.
\end{lem}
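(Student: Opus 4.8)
The plan is to fix notation: write the two copies of $K_4-e$ as quadruples $A,B,C,D$ and $A',B',C',D'$, chosen so that the color-preserving isomorphism is $A\mapsto A'$, $B\mapsto B'$, $C\mapsto C'$, $D\mapsto D'$, with $AC$ and $A'C'$ the missing edges (a graph isomorphism of $K_4-e$ must match the unique non-adjacent pair of each copy). Then the five pairs of corresponding edges $AB/A'B'$, $BC/B'C'$, $CD/C'D'$, $DA/D'A'$, $BD/B'D'$ each consist of two equally-colored secants of the arc; since two secants of the same color pass through the same point of the focus set on $\ell$, each of these five pairs of lines meets $\ell$ in a common point. The goal is to show $c(CA)=c(C'A')$, i.e.\ that the two ``completing'' edges likewise meet $\ell$ in a common point.

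First I would feed Desargues' theorem the triangles $ABD$ and $A'B'D'$: their three pairs of corresponding sides are $AB/A'B'$, $BD/B'D'$, $DA/D'A'$, which by the previous paragraph meet in three points of $\ell$; so these triangles are in perspective from the line $\ell$, hence --- by Desargues --- in perspective from a point $O$, meaning $AA'$, $BB'$, $DD'$ are concurrent at $O$. Running the same argument on the triangles $CBD$ and $C'B'D'$ (corresponding sides $CB/C'B'$, $BD/B'D'$, $DC/D'C'$, again meeting on $\ell$) yields a point $O'$ with $CC'$, $BB'$, $DD'$ concurrent at $O'$. Because $B,B',D,D'$ are four distinct points of the arc, no three of them are collinear, so $BB'\ne DD'$ and these two lines meet in a single point; therefore $O=O'$, and hence $AA'$, $BB'$, $CC'$, $DD'$ all pass through the one point $O$.

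Now the triangles $ABC$ and $A'B'C'$ are in perspective from the point $O$, so Desargues' theorem in the other direction says they are in perspective from a line: the points $AB\cap A'B'$, $BC\cap B'C'$, and $CA\cap C'A'$ are collinear. The first two of these lie on $\ell$ and are distinct (the edges $AB$ and $BC$ meet at the vertex $B$, so they have different colors), so the axis of this perspectivity can only be $\ell$ itself. Hence $CA$ and $C'A'$ meet $\ell$ at a common point; since $CA$ meets $\ell$ in the focus point of the color $c(CA)$ and $C'A'$ in the focus point of $c(C'A')$, it follows that $c(CA)=c(C'A')$, which is the assertion. (When the four-cycle $A,B,C,D$ happens to be two-colored one can skip Desargues and read the conclusion directly off Lemma~\ref{lem:c4}, but that shortcut does not cover the general configuration pictured in Figure~\ref{fig:K4}.)

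The main obstacle is not any single application of Desargues but choosing the right sequence of triangle pairs: one has to first produce a point-perspectivity for two auxiliary triangles that avoid the missing edge, amalgamate their two centers into a single point $O$ governing all of $A,B,C,D\leftrightarrow A',B',C',D'$, and only then bring in the triangles $ABC$, $A'B'C'$ that do involve it. Everything else is routine non-degeneracy bookkeeping --- that the named secants are genuinely distinct lines, that the relevant focus points are genuinely distinct, and that no center or axis occupies a degenerate position --- and each such check reduces to the single fact that an arc has no three collinear points.
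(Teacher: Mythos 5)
Your proof is correct and follows essentially the same route as the paper's: two applications of Desargues to triangle pairs avoiding the missing edge to pin down a single center of perspectivity, then Desargues in the converse direction on a triangle pair containing the missing edge, with the axis forced to be the focus line. The only difference is presentational --- you spell out explicitly (via $BB'\cap DD'$) why the two centers coincide, a step the paper asserts more tersely.
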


\begin{proof}

Let $\mathcal{H}$ be a hyperfocused arc with focus line $\ell$, and let $\mathcal{F}$ be the 1-factorization obtained from the focus set of $\mathcal{H}$. Additionally let $\mathcal{F}_1,\dots,\mathcal{F}_5$ be 1-factors in $\mathcal{F}$. Assume that $A,B,C,D,E,F,G,H \in \mathcal{H}$, and that $(AB), (EF) \in \mathcal{F}_1$, $(AC), (EG) \in \mathcal{F}_2$, $(BC), (FG) \in \mathcal{F}_3$, $(BD), (FH) \in \mathcal{F}_4$, and $(CD), (GH) \in \mathcal{F}_5$, where $(XY)$ denotes the edge between the vertices corresponding to the points on $\mathcal{H}$. See Figure \ref{fig:K4}. 

Observe that the triangles $\Delta ABC$ and $\Delta EFG$ are in perspective from the line $\ell$ and so must be in perspective from a point. Call that point $V$. Now, triangles $\Delta BCD$ and $\Delta FGH$ are also in perspective from $\ell$ and so must also be in perspective from $V$. Observe that triangles $\Delta ACD$ and $\Delta EGH$ are also in perspective from $V$ and so must be in perspective from a line. Since $AC$ meets $EG$ on $\ell$ and $CD$ meets $GH$ on $\ell$ we must have that $AD$ meets $EH$ on $\ell$. Thus $(AD)$ and $(EH)$ are in the same 1-factor (hence have the same color), which completes both $K_4-e$ to a $K_4$.\qed
\end{proof}

\begin{figure}
\centering
\begin{tikzpicture}[scale = 2]

\draw[color = blue, dashed, ultra thick] (0,0) -- (0,1);
\draw[color = red, ultra thick] (0,0) -- (1,0);
\draw[color = green, densely dotted, ultra thick] (0,0) -- (1,1);
\draw[color = black, ultra thin] (0,.975) -- (.975,0);
\draw[color = black, ultra thin] (.025,1) -- (1,.025);
\draw[color = orange, dashdotted, ultra thick] (1,0) -- (1,1);

\draw[color = blue, dashed, ultra thick] (2,0) -- (2,1);
\draw[color = red, ultra thick] (2,0) -- (3,0);
\draw[color = green, densely dotted, ultra thick] (2,0) -- (3,1);
\draw[color = black, ultra thin] (2,.975) -- (2.975,0);
\draw[color = black, ultra thin] (2.025,1) -- (3,.025);
\draw[color = orange, dashdotted, ultra thick] (3,0) -- (3,1);

 \fill (0,0) node[left]{$B$} circle (1.5pt);
 \fill (0,1) node[left]{$A$} circle (1.5pt);
 \fill (1,0) node[right]{$C$} circle (1.5pt);
 \fill (1,1) node[right]{$D$} circle (1.5pt);
 
  \fill (2,0) node[left]{$F$} circle (1.5pt);
 \fill (2,1) node[left]{$E$} circle (1.5pt);
 \fill (3,0) node[right]{$G$} circle (1.5pt);
 \fill (3,1) node[right]{$H$} circle (1.5pt);

\end{tikzpicture}
\caption{ Two corresponding $K_4 - e$.}
\label{fig:K4}
\end{figure}
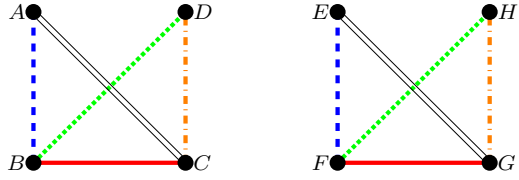

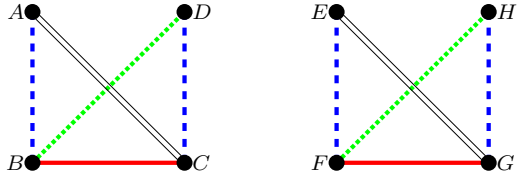
\begin{figure}
\centering
\begin{tikzpicture}[scale = 2]

\draw[color = blue, dashed, ultra thick] (0,0) -- (0,1);
\draw[color = red, ultra thick] (0,0) -- (1,0);
\draw[color = green, densely dotted, ultra thick] (0,0) -- (1,1);
\draw[color = black, ultra thin] (0,.975) -- (.975,0);
\draw[color = black, ultra thin] (.025,1) -- (1,.025);
\draw[color = blue, dashed, ultra thick] (1,0) -- (1,1);

\draw[color = blue, dashed, ultra thick] (2,0) -- (2,1);
\draw[color = red, ultra thick] (2,0) -- (3,0);
\draw[color = green, densely dotted, ultra thick] (2,0) -- (3,1);
\draw[color = black, ultra thin] (2,.975) -- (2.975,0);
\draw[color = black, ultra thin] (2.025,1) -- (3,.025);
\draw[color = blue, dashed, ultra thick] (3,0) -- (3,1);

 \fill (0,0) node[left]{$B$} circle (1.5pt);
 \fill (0,1) node[left]{$A$} circle (1.5pt);
 \fill (1,0) node[right]{$C$} circle (1.5pt);
 \fill (1,1) node[right]{$D$} circle (1.5pt);
 
  \fill (2,0) node[left]{$F$} circle (1.5pt);
 \fill (2,1) node[left]{$E$} circle (1.5pt);
 \fill (3,0) node[right]{$G$} circle (1.5pt);
 \fill (3,1) node[right]{$H$} circle (1.5pt);

\end{tikzpicture}
\caption{Another example of corresponding $K_4 - e$.}
\label{fig:K4alt}
\end{figure}

The following theorems from Cherowitzo and Holder, which classify all hyperfocused arcs contained in a hyperconic and contain the nucleus, as well as Pascal's theorem, are the final pieces of information needed for us to complete our classification.

\begin{thm}[Pascal's Theorem]
The six points of a hexagon lie on a conic if and only if the pairs of opposite sides meet in three colinear points.
\end{thm}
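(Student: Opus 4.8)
Pascal's theorem is classical, so the plan is to recall the shortest self-contained argument and check that it survives the passage to PG$(2,2^h)$. I would use the proof by a pencil of plane cubics. Label the six points $P_1,\dots,P_6$, write $\ell_{ij}$ for the line $P_iP_j$, and let $X=\ell_{12}\cap\ell_{45}$, $Y=\ell_{23}\cap\ell_{56}$, $Z=\ell_{34}\cap\ell_{61}$ be the three points in which opposite sides of the hexagon meet. The first step is to introduce the two degenerate cubics $F=\ell_{12}\ell_{34}\ell_{56}$ and $G=\ell_{23}\ell_{45}\ell_{61}$; both vanish at the nine points $P_1,\dots,P_6,X,Y,Z$, and when the $P_i$ form an arc these nine points are distinct.

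For the forward direction, let $Q$ be a quadratic form cutting out the conic $\mathcal{C}$ through $P_1,\dots,P_6$ and let $n$ be the line $XY$. Then the cubic $Q\cdot n$ vanishes at eight of the nine base points, namely at $P_1,\dots,P_6,X,Y$. One checks from the arc property that these eight impose independent conditions on plane cubics --- the key point being that no conic contains all of them, since $X\notin\mathcal{C}$ --- so the cubics through them form a two-dimensional space, necessarily spanned by $F$ and $G$. Hence $Q\cdot n\in\langle F,G\rangle$, and $Q\cdot n$ therefore vanishes at the remaining base point $Z$ as well. Since $Z\notin\mathcal{C}$ (a secant of an arc meets the conic only in the two arc points lying on it), we conclude $Z\in n$, i.e.\ $X$, $Y$, $Z$ are collinear.

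For the converse I would use a short degree count. Fix $P_1,\dots,P_5$ and let $P$ vary, so that $X=\ell_{12}\cap\ell_{45}$ is fixed while $Y(P)=\ell_{23}\cap\ell_{5P}$ and $Z(P)=\ell_{34}\cap\ell_{1P}$ have homogeneous coordinates depending linearly on $P$. Then the collinearity condition $\det[X,Y(P),Z(P)]=0$ is a single non-trivial quadratic equation in $P$, so the set of $P$ for which the hexagon $P_1P_2P_3P_4P_5P$ has collinear diagonal points is a conic; checking the five degenerate cases shows it passes through $P_1,\dots,P_5$, and therefore, since five points with no three collinear lie on a unique conic, it coincides with $\mathcal{C}$. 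As $P_6$ satisfies the hypothesis, $P_6\in\mathcal{C}$. The only genuine content beyond these two arguments is the non-degeneracy bookkeeping --- distinctness of the nine points and independence of the eight in the forward half, and non-triviality of the quadratic together with its identification with $\mathcal{C}$ in the converse --- and I expect this to be the entire difficulty; each instance is automatic because the six points form an arc in PG$(2,2^h)$. A fully synthetic alternative, valid over any field, would instead use Steiner's projective generation of a conic together with a cross-ratio computation in place of the cubics.
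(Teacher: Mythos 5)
The paper does not prove this statement: Pascal's Theorem is quoted, like Desargues' Theorem, as classical background to be used later (its converse is applied to six-point subsets of the putative hyperfocused arc), so there is no in-paper argument to compare yours against. Judged on its own, your sketch follows a standard and essentially viable route: the cubic pencil $F=\ell_{12}\ell_{34}\ell_{56}$, $G=\ell_{23}\ell_{45}\ell_{61}$ for the forward direction and a degree count for the Braikenridge--Maclaurin converse, and nothing in either argument is sensitive to characteristic $2$.

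Two points need shoring up. First, in the forward direction you lean on the claim that the eight points $P_1,\dots,P_6,X,Y$ impose independent conditions on cubics. That is the $n\le 8$ Cayley--Bacharach lemma, whose correct hypotheses are that no five of the points are collinear \emph{and} that not all eight lie on a conic (your ``key point'' is only the second half, though the first is automatic here), and whose proof is a case analysis comparable in length to Pascal itself; as written this is an appeal to an unproved lemma. The usual way to avoid it is to pick a ninth point $P_0$ on the conic, choose $\lambda,\mu$ with $(\lambda F+\mu G)(P_0)=0$, note that the cubic $\lambda F+\mu G$ meets the irreducible conic in at least seven points, so by B\'ezout it factors as $Q\cdot n$ with $n$ a line, and then $X,Y,Z\in n$ since they lie on the cubic but not on the conic. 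Second, in the converse you cannot dismiss the degeneracy issues as ``automatic because the six points form an arc'': that the vertices lie on a (nondegenerate) conic is the \emph{conclusion} of the converse, so ``no three vertices collinear'' must be carried as a hypothesis --- harmless for this paper, where the six points are taken from a hyperfocused arc --- and even with it you still owe a short argument that the quadratic $\det[X,Y(P),Z(P)]$ is not identically zero (e.g.\ let $P$ run over a suitable line through $P_1$, so that $Z(P)$ is pinned and $Y(P)$ sweeps out $\ell_{23}$, and derive a collinearity violating the arc condition); only then does its zero locus, being a conic through five points no three collinear, coincide with the unique conic through $P_1,\dots,P_5$.
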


\begin{thm}[\cite{Cherowitzo}]
\label{thm:secant}
A set of points $K$ with $3 < | K |$ on a hyperconic in PG($2, q), q = 2^h$ which includes the nucleus $N$ of the conic is hyperfocused on a secant line to that conic which does not meet $K$ if and only if $K \setminus \{ N \}$ is projectively equivalent to a set of points determined by a subgroup of (GF($q)^*$,$\cdot$).
\end{thm}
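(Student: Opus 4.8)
The plan is to put the configuration into a standard coordinate frame and translate the hyperfocused condition into a statement about product sets in $(\mathrm{GF}(q)^*,\cdot)$. Since the stabiliser of an irreducible conic in PG($2,q$) induces PGL($2,q$) on its points, and this action is sharply $3$-transitive, I may assume the conic is $C=\{(1,t,t^2):t\in\mathrm{GF}(q)\}\cup\{(0,0,1)\}$, its nucleus is $N=(0,1,0)$, and the given secant line $\ell$ meets $C$ in the two points $t=0$ and $t=\infty$; then $\ell$ is the line $Y=0$, which indeed avoids $N$. Writing $\mathcal{K}=\{N\}\cup S$, where $S$ is the set of conic points of $\mathcal{K}$, the hypothesis that $\ell$ misses $\mathcal{K}$ says exactly that $S$ corresponds to a subset of $\mathrm{GF}(q)^*$, which I also denote $S$. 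Note that the residual freedom $t\mapsto\lambda t$ ($\lambda\in\mathrm{GF}(q)^*$) is still available and will be used at the end.

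Next I would locate the focus points by direct computation in characteristic $2$. The line through the conic points $s$ and $t$ (with $s\neq t$, both in $S$) meets $Y=0$ in the point $(1,0,st)$, while the line through $N$ and the conic point $t$ — which is the tangent to $C$ at $t$, namely $t^2X+Z=0$ — meets $Y=0$ in $(1,0,t^2)$. Identifying the point $(1,0,c)$ of $\ell$ with the scalar $c\in\mathrm{GF}(q)^*$, the focus set of $\mathcal{K}$ on $\ell$ is
\[
\{\,st:s,t\in S,\ s\neq t\,\}\ \cup\ \{\,t^2:t\in S\,\}\;=\;S\cdot S,
\]
the full product set of $S$ inside $\mathrm{GF}(q)^*$. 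Since $|\mathcal{K}|=|S|+1$, the arc $\mathcal{K}$ is hyperfocused on $\ell$ exactly when $|S\cdot S|=|S|$.

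To finish, I would invoke the elementary fact that a nonempty finite subset $S$ of an abelian group satisfies $|S\cdot S|\ge|S|$ (take $a\in S$; then $S\cdot S\supseteq aS$ and $|aS|=|S|$), with equality if and only if $S$ is a coset of a subgroup: replacing $S$ by $a^{-1}S$ we may assume $1\in S$, so $S\subseteq S\cdot S$ forces $S\cdot S=S$, whence $S$ is finite, contains $1$, and is closed under multiplication, hence a subgroup; conversely $gH\cdot gH=g^2H$ has size $|H|=|gH|$. Finally, the coset $aH$ is carried onto the subgroup $H$ by the projectivity $t\mapsto a^{-1}t$, which fixes $C$, $N$ and $\ell$, so ``$S$ is a coset of a subgroup'' coincides with ``$S\setminus\{N\}$ is projectively equivalent to a set of points determined by a subgroup''. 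Assembling these statements gives the theorem; as a consistency check, subgroups of $\mathrm{GF}(2^h)^*$ have odd order, so $|\mathcal{K}|$ is automatically even, and the hypothesis $3<|\mathcal{K}|$ is precisely what removes the trivial small cases.

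The main obstacle I expect is not conceptual but the careful accounting at the second step: one must verify both intersection computations and, in particular, recognise that the secants through the nucleus contribute exactly the squares $\{t^2:t\in S\}$, so that the focus set is the \emph{entire} product set $S\cdot S$ rather than merely the off-diagonal products $\{st:s\neq t\}$. A lesser point needing care is the reduction to standard coordinates, i.e.\ checking that the conic, its nucleus, and the chosen secant line can be normalised simultaneously, but this is routine given the action of PGL($2,q$) on conic points.
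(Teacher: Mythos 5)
The paper does not prove this statement at all---it is quoted from Cherowitzo and Holder \cite{Cherowitzo} and used as a black box---so there is no in-paper argument to compare against; your proposal has to stand on its own. Its core is sound and matches the standard approach: normalizing to the conic $Y^2=XZ$ with nucleus $(0,1,0)$ and focus line $Y=0$ is legitimate by the $3$-transitivity of the conic stabilizer, the intersection computations are correct (the secant through conic points $s,t$ meets $Y=0$ at $(1,0,st)$, and the line joining the nucleus to the point $t$ is the tangent $t^2X+Z=0$, meeting $Y=0$ at $(1,0,t^2)$), so the focus set is exactly the product set $S\cdot S\subseteq\mathrm{GF}(q)^*$, and your coset characterization of $|S\cdot S|=|S|$ in an abelian group is correct. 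This cleanly gives the ``only if'' direction: hyperfocused on the given secant line forces $S=aH$, and $t\mapsto a^{-1}t$ (i.e.\ $(X,Y,Z)\mapsto(X,a^{-1}Y,a^{-2}Z)$) fixes the conic, nucleus and focus line while carrying $aH$ to $H$.

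The one genuine soft spot is the ``if'' direction. There the hypothesis is only a projective equivalence between the \emph{point set} $K\setminus\{N\}$ and a subgroup point set; no focus line is given, so your opening normalization (which presupposes a secant line avoiding $K$) does not apply, and you must produce the line. Your computation does show that the model arc $\{N\}\cup\{(1,h,h^2):h\in H\}$ is hyperfocused on $Y=0$, but to transport this to $K$ you need the equivalence to respect the conic and nucleus: for $|K|\ge 6$ this follows because $K\setminus\{N\}$ contains at least five conic points, so the projectivity carries the model conic to the given conic, hence nucleus to nucleus, and $Y=0$ to a secant line of the given conic missing $K$; the remaining case $|K|=4$ (subgroup of order $3$, existing only for $h$ even) needs a separate short check, e.g.\ that the diagonal line of the quadrangle formed by three conic points and the nucleus is a secant avoiding $K$ precisely when $\mathrm{GF}(4)\subseteq\mathrm{GF}(q)$. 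With those two sentences added your argument is complete; as written, the final ``assembling these statements gives the theorem'' quietly skips this transfer step.
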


\begin{thm}[\cite{Cherowitzo}]
\label{thm:external}
A set of points $K$ with $3 < | K |$ on a hyperconic in PG($2, q), q = 2^h$ which includes the nucleus $N$ of the conic is hyperfocused on an exterior line to that conic if and only if $K \setminus \{ N \}$ is projectively equivalent to a set of points determined by a subgroup of $\mathbb{Z}_{q+1}$.
\end{thm}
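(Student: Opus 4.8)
The plan is to install explicit coordinates for the hyperconic using the quadratic extension $\mathrm{GF}(q^2)$, reduce to a single canonical exterior line, and then translate ``hyperfocused'' into a purely group‑theoretic statement about the norm‑one subgroup of $\mathrm{GF}(q^2)^{*}$, which is cyclic of order $q+1$.

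First I would fix the model. View $\mathrm{GF}(q)^3$ as $\mathrm{GF}(q^2)\oplus\mathrm{GF}(q)$ and take the quadratic form $Q(z,t)=z^{q+1}+t^2$. A short computation gives its associated bilinear form $B\bigl((z,t),(z',t')\bigr)=\mathrm{Tr}_{\mathrm{GF}(q^2)/\mathrm{GF}(q)}(z z'^{q})$, whose radical is $\langle(0,1)\rangle$; hence the zero set is the conic $\mathcal{C}=\{C_z:=\langle(z,1)\rangle : z\in U\}$, where $U=\{z\in\mathrm{GF}(q^2)^{*}:z^{q+1}=1\}\cong\mathbb{Z}_{q+1}$, and its nucleus is $N=\langle(0,1)\rangle$. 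Since the $q+1$ tangents of $\mathcal{C}$ all pass through $N$ they are precisely the $q+1$ lines on $N$, so any line exterior to $\mathcal{C}$ avoids $N$; and because the stabilizer of $\mathcal{C}$ (acting as $\mathrm{PGL}(2,q)$) is transitive on exterior lines, it suffices to treat the exterior line $\ell_0=\{\langle(z,0)\rangle:z\in\mathrm{GF}(q^2)^{*}\}$ (which misses $\mathcal{C}$ and $N$). As $q$ is even, $U\cap\mathrm{GF}(q)^{*}=\{1\}$, so $u\mapsto\langle(u,0)\rangle$ is a bijection $U\to\ell_0$, and I will use it to coordinatize $\ell_0$ by $U$.

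Now the key computation. For $z\ne w$ in $U$: the tangent $N C_z$ meets $\ell_0$ at $\langle(z,0)\rangle$, i.e.\ at $z$; and the chord $C_z C_w$ meets $\ell_0$ at $\langle(z+w,0)\rangle$, which after rescaling to norm one via $(z+w)^{q+1}=(z+w)^2/(zw)$ (valid since $z^q=z^{-1}$, $w^q=w^{-1}$) equals $\langle(\sqrt{zw},0)\rangle$, where $\sqrt{\cdot}$ is the unique square root in $\mathrm{GF}(q^2)$, which lies in $U$ because $\gcd\bigl(2(q+1),q^2-1\bigr)=q+1$. Writing $K=\{N\}\cup\{C_z:z\in S\}$ with $S\subseteq U$ and $|S|>2$, the $|S|$ tangents from $N$ already meet $\ell_0$ in the $|S|$ distinct points of $S$, so $K$ is hyperfocused on $\ell_0$ exactly when $\sqrt{zw}\in S$ for all $z\ne w$ in $S$. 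Since squaring is an automorphism of the odd‑order group $U$, this is equivalent to $zw\in S^2$ for all $z,w\in S$ (the case $z=w$ being automatic), i.e.\ to $S\cdot S\subseteq S^2$, and since always $S^2\subseteq S\cdot S$, to $S\cdot S=S^2$. I would then prove the small lemma: for $S$ in a finite cyclic group $U$ of odd order, $S\cdot S=S^2$ iff $S$ is a coset of a subgroup. The ``if'' direction is immediate ($(aH)(aH)=a^2H=a^2H^2=(aH)^2$, using that squaring fixes the odd‑order $H$); for ``only if'', note $|S\cdot S|=|S^2|=|S|$, so for each $a\in S$ we have $aS\subseteq S\cdot S=S^2$ with $|aS|=|S|=|S^2|$, hence $aS=S^2$ independently of $a$; thus $a^{-1}b\in H:=\{g\in U:gS=S\}$ for all $a,b\in S$, so $S\subseteq aH$, and since $S$ is a union of $H$‑cosets, $S=aH$. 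Finally the map $u\mapsto a^{-1}u$ is induced by the collineation $(z,t)\mapsto(a^{-1}z,t)$ fixing $\mathcal{C}$, $N$ and $\ell_0$, so $K\setminus\{N\}$ is projectively equivalent to $\{C_z:z\in H\}$, the set determined by a subgroup $H\le U\cong\mathbb{Z}_{q+1}$; conversely such sets satisfy $S\cdot S=S^2$ and hence are hyperfocused on $\ell_0$, and pulling back by the equivalence that moved a general exterior line to $\ell_0$ closes both directions.

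The main obstacle is choosing the coordinate model and performing the chord–intersection computation so that $\mathbb{Z}_{q+1}$ appears transparently: the crux is recognizing that, with $\ell_0$ coordinatized by the norm‑one group, a chord $C_zC_w$ meets $\ell_0$ in $\sqrt{zw}$ — the exact analogue of the product $st$ in the secant‑line situation of Theorem~\ref{thm:secant}. Everything afterward, including the $S\cdot S=S^2$ lemma, is short. A more synthetic alternative would replace this computation by repeated applications of Pascal's Theorem to hexagons inscribed in $\mathcal{C}$ to derive the same incidence, but the algebraic route seems cleaner to carry out.
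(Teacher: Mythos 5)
This theorem is quoted in the paper from Cherowitzo and Holder \cite{Cherowitzo} and is not proved there, so there is no internal proof to compare against; your proposal is a self-contained reconstruction, and its core is correct. The model $Q(z,t)=z^{q+1}+t^2$ on $\mathrm{GF}(q^2)\oplus\mathrm{GF}(q)$ does have nucleus $\langle(0,1)\rangle$ and conic $\{\langle(z,1)\rangle:z^{q+1}=1\}$, the chord $C_zC_w$ does meet $t=0$ at $\langle(z+w,0)\rangle$, and your norm computation $(z+w)^{q+1}=(z+w)^2/(zw)$ correctly identifies its norm-one representative as $\sqrt{zw}$; since the secants through $N$ already cover the $|S|$ points of $S$, the hyperfocused condition is exactly $S\cdot S=S^2$, and your coset lemma and its proof (via $aS=S^2$ for all $a\in S$ and the stabilizer subgroup $H$) are sound, as is the final translation by $a^{-1}\in U$, which lies in the conic's stabilizer. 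This is essentially the standard route to the Cherowitzo--Holder result (the cyclic group of order $q+1$ attached to an exterior line realized as the norm-one subgroup), so you are not losing or gaining generality relative to the cited source. Two points deserve explicit mention rather than a passing phrase: (i) transitivity of the conic's stabilizer $\mathrm{PGL}(2,q)$ on exterior lines, which you use to reduce to $\ell_0$, should be justified or cited (it follows, e.g., by counting: the stabilizer of a conjugate pair of $\mathrm{GF}(q^2)$-points is dihedral of order $2(q+1)$, giving $q(q-1)/2$ images, the number of exterior lines); and (ii) the ``pulling back'' in the converse tacitly assumes the projectivity realizing the equivalence carries the conic of $K$ to the model conic (and hence nucleus to nucleus and exterior line to exterior line). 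That is automatic once $|K\setminus\{N\}|\ge 5$, since five points of an arc lie on a unique conic, but for $|K|=4$ or $5$ the literal statement requires the equivalence to respect the conic; this caveat is inherent in the quoted formulation and harmless for the paper's application to $12$-arcs, but your write-up should acknowledge it.
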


\section{Classifying Hyperfocused 12-arcs}

In order to classify hyperfocused 12-arcs we examined the list of 1-factorizations of $K_{12}$. This list was graciously provided in usable form by Kaski and {\"O}sterg{\aa}rd from their paper \cite{Kaski}. We studied each 1-factorization and checked, via computer, the necessary conditions from Lemma \ref{lem:c4} and Lemma \ref{lem:2k4}. The code can be found at \cite{Hartke}. 

When performing the computer search we break the computation down into two parts.
The first is a filter of the $526,915,620$ 1-factorizations of $K_{12}$ to determine which satisfy Lemma~\ref{lem:c4}. This filtering produced a list of only $253$ 1-factorizations.  We then checked the list of 253 against the conditions of Lemma~\ref{lem:2k4}, which yielded a list of $2$ remaining 1-factorizations.

To filter using Lemma~\ref{lem:c4}, we used the following process.
In each 1-factorization, we examine each 1-factor $\mathcal{F}$ and each pair $P$ of edges from $\mathcal{F}$.
The pair $P$ contains four vertices that we use to check Lemma~\ref{lem:c4}.
There are two pairs of disjoint edges between these vertices that are not in $\mathcal{F}$.  
Lemma~\ref{lem:c4} states that if one of the pairs of disjoint edges is contained in another 1-factor, then the other pair must also be contained in a third 1-factor.
So we check if exactly one of the pairs is contained in a 1-factor, and if so, exclude the 1-factorization from our list.  Otherwise the 1-factorization makes it through our filter.
As noted above, there are $253$ such 1-factorizations that made it through this step.
As an optimization, in a given $1$-factorization we check every $1$-factor as described above except for one.  Since two 1-factors are needed for a violation of Lemma~\ref{lem:c4}, the method described above will detect the violation starting from either 1-factor.

The next part of the computation is to check the surviving 1-factorizations against Lemma~\ref{lem:2k4}. For each 1-factorization, we examine each set of four unordered vertices from $K_{12}$ and then another set of four ordered vertices disjoint from the original four.
These vertices make up the vertices of our two disjoint $K_4$s. 
The choosing of unordered and ordered sets of vertices accounts for the different ways the edges may correspond.
Next, we counted how many pairs of corresponding edges between the two $K_4$s have the same color, and if there are exactly five pairs of corresponding edges with the same color, Lemma~\ref{lem:2k4} is violated and we exclude that 1-factorization from our list. 
At the end of this process we were left with only 2 1-factorizations.

We now consider both of these 1-factorizations which are given below. We identify the vertices of the complete graph $K_{12}$ with the integers $0,1,\dots,11$ and the edges as order pairs $(x,y)$ with $x,y \in \{0,1,\dots,11\}$. Further, we identify the 1-factors with letters $A,\dots,K$. The first 1-factorization we want to examine is:
\vspace{.1in}
\begin{center}
\noindent
A: [(0,1), (2,3), (4,5), (6,7), (8,9), (10,11)]\\
B: [(0,2), (1,3), (4,6), (5,7), (8,10), (9,11)]\\
C: [(0,3), (1,2), (4,7), (5,6), (8,11), (9,10)]\\
D: [(0,4), (1,5), (2,8), (3,9), (6,11), (7,10)]\\
E: [(0,5), (1,4), (2,10), (3,11), (6,8), (7,9)]\\
F: [(0,9), (1,8), (2,4), (3,5), (6,10), (7,11)]\\
G: [(0,11), (1,10), (2,5), (3,4), (6,9), (7,8)]\\
H: [(0,6), (1,7), (2,11), (3,10), (4,8), (5,9)]\\
I: [(0,7), (1,6), (2,9), (3,8), (4,11), (5,10)]\\
J: [(0,10), (1,11), (2,6), (3,7), (4,9), (5,8)]\\
K: [(0,8), (1,9), (2,7),(3,6), (4,10), (5,11)]\\
\end{center}
\vspace{.1in}
Assume that this 1-factorization embeds and without loss of generality assume that we have coordinates $A = (1,0,0)$, $B = (0,1,0)$, ${\bf 0 } = (0,0,1)$, and ${\bf 3} = (1,1,1)$. Thus the focus line is $[0,0,1]^T$ and since $(03)$ meets the focus line at $C$ we must have $C = (1,1,0)$. Completing the Fano plane $0123ABC$ yields ${\bf 1} = (1,0,1)$, ${\bf 2} = (0,1,1)$. The point ${\bf 4}$ cannot be on the line $[0,0,1]^T$ so assume it has coordinates $(x,y,1)$. Observe that $(45)$ passes through $A$ and $(46)$ passes through $B$ so we must have coordinates ${\bf 5} = (s,y,1)$ and ${\bf 6} = (x,t,1)$. Since $4567ABC$ forms a Fano plane we must also have ${\bf 7 } = (s,t,1)$ with $x + s = y + t$. Since $D = (04) \cap (1,5)$ we must have that $D = (x,y,s+x+1)$ and using the fact that the focus line is $[0,0,1]^T$ we get $s = x+1$ and consequently $t = y +1$. Further we have $E = (x+1,y,0)$, $F = (x,y+1,0)$, and $G = (x+1,y+1,0)$. Now, $(06)$ meets the focus line at $H$, so $H = (x,y+1,0)$, forcing $H = F$, a contradiction. Thus this 1-factorization is not embeddable. We now consider the only remaining 1-factorization.

\vspace{.1in}
\begin{center}
\noindent
A: [(0,1), (2,3), (4,5), (6,7), (8,9), (10,11)]\\
B: [(0,2), (1,4), (3,6), (5,8), (7,10), (9,11)]\\
C: [(0,11), (1,2), (3,4), (5,6), (7,8), (9,10)]\\
D: [(0,3), (1,5), (2,7), (4,9), (6,11), (8,10)]\\
E: [(0,10), (1,3), (2,5), (4,7), (6,9), (8,11)]\\
F: [(0,4), (1,8), (2,6), (3,10), (5,11), (7,9)]\\
G: [(0,9), (1,6), (2,4), (3,8), (5,10), (7,11)]\\
H: [(0,5), (1,9), (2,11), (3,7), (4,10), (6,8)]\\
I: [(0,8), (1,7), (2,9), (3,5), (4,11), (6,10)]\\
J: [(0,6), (1,11), (2,10), (3,9), (4,8), (5,7)]\\
K: [(0,7), (1,10), (2,8), (3,11), (4,6), (5,9)]\\
\end{center}
\vspace{.1in}
The hexagons 236541, 231547, 23754[10], 387421, 139748, 13[11]746 show that the points 1-11 lie on a conic $\mathcal{C}$ by the converse of Pascal's Theorem and the fact that five points determine a conic. The hexagon 015327 shows that 0 does not lie on $\mathcal{C}$. The triangles 123, 0[10][11] are in perspective from the focus line so the lines (2[11]), (01), (3[10]) meet at a point $X$. The quadrangle 23[10][11] has diagonal line (AX) showing (01) is a tangent line of the conic. The triangles 014, 326 are in perspective from the focus line, so the lines (13), (02), (46) meet at a point $Y$. The quadrangle 1346 has diagonal line (BY) showing (02) is a tangent line of the conic. Hence, $0$ is the nucleus of $\mathcal{C}$, as it is the intersection of two tangent lines.

Since 1-11 lie on a conic we will assume that the equation of the conic is $x^2 = yz$. The nucleus of this conic is $(1,0,0)$ so let {\bf 0} = $(1,0,0)$ and WLOG let {\bf 1} =$ (0,1,0)$, {\bf 2} = $(0,0,1)$, {\bf 3} = $(1,1,1)$. Now let $a \in GF(q) - \{ 0,1 \}$ and let {\bf 4} = $(a,a^2,1)$. As $A$ is the intersection of $(01)$ and $(23)$ it has coordinates $(1,1,0)$, and $B$ is the intersection of $(02)$ and $(14)$ so it has coordinates $(a,0,1)$. Thus our focus line is $[1,1,a]^T$.

Using the equation of the conic, the line $[1,1,a]^T$ and the known coordinates we can determine the coordinates of the remaining points on the conic, and the points in the focus set shown in Table \ref{fig:methods}. To assist the reader we note the methods by which the coordinates of the points above were determined.

\begin{table}
\centering
\resizebox{\columnwidth}{!}{%
\begin{tabular}{|l|l|l||l|l|l|} \hline
Point & Coordinates & Determined by & Point & Coordinates & Determined by \\ \hline
{\bf 0 } & $(1,0,0)$ & Nucleus & $A$ & $(1,1,0)$ & $(02) \cap (13)$ \\ \hline
{\bf 1 } & $(0,1,0)$ & Assumed & $B$ & $(a,0,1)$ & $(02) \cap (14) $\\ \hline
{\bf 2 } & $(0,0,1)$ & Assumed & $C$ & $(0,a,1)$ & $(12) \cap [1,1,a]^T$ \\ \hline
{\bf 3 } & $(1,1,1)$ & Assumed & $D$ & $(a+1,1,1)$ & $(03) \cap [1,1,a]^T$ \\ \hline
{\bf 4 } & $(a,a^2,1)$ & Assumed & $E$ & $(1, a+1, 1)$ & $(13) \cap [1,1,a]^T$ \\ \hline
{\bf 5 } & $(a+1, a^2+1, 1)$ & $(D1) \cap \mathcal{C}$ & $F$ & $(a^2 + a, a^2, 1)$ & $(04) \cap [1,1,a]^T$\\ \hline
{\bf 6 } & $(a^2 + a, a^2, a^2 + 1)$ & $(B3) \cap (F2)$ & $G$ & $(a, a^2, a+1)$ & $(24) \cap [1,1,a]^T$\\ \hline
{\bf 7 } & $(a+1, 1, a^2 + 1)$ & $(D2) \cap (E4)$ & $H$ & $(a^2 + a + 1, a^2 + 1, 1)$ & $(05) \cap [1,1,a]^T$ \\ \hline
{\bf 8 } & $(a^2 + a, a^4 + a^2, 1)$ & $(F1) \cap \mathcal{C}$ & $I$ & $(1, a^2 + a + 1, a+1)$ & $(35) \cap [1,1,a]^T$ \\ \hline
{\bf 9 } & $(a^2 + a + 1, a^4 + a^2 + 1, 1)$ & $(H1) \cap \mathcal{C}$ & $J$ & $(a^5 + 1, a^5 + a^3, a^5+1)$ & $(06) \cap (39)$ \\ \hline
{\bf 10}  & $(a^3+a+1,a^3+a^2+a+1,a^2+1)$ & $(E0) \cap (F3)$ & $K$ & $(1, a^2 + a, a^4 + a^3 + a^2 + a)$ & $(07) \cap (28)$ \\ \hline
{\bf 11}  & $(a^3 + a^2 + a, a^3 + a, a^2 + 1)$ & $(C0) \cap (H2)$ & & & \\ \hline 
\end{tabular}%
}
\caption{Coordinates of the hyperfocused arc and focus set.}
\label{fig:methods}
\end{table}

An alternative way to compute the coordinates of point {\bf 8} is to consider the intersection of the lines $(F1)$ and $(I0)$. When performing this computation we get that the coordinates of {\bf 8} must be $(a^3 + a, a^2 + a + 1, a+1)$. Multiplying the coordinates of {\bf 8} from Table \ref{fig:methods} by $(a+1)$ gives us that the coordinates of {\bf 8} must also be $(a^3 + a, a^5 + a^4 + a^3 + a^2, a+1)$ and thus $a^5 + a^4 + a^3 + a^2 = a^2 + a + 1$. Rearranging the terms yields $a^5 =  a^4 + a^3 + a + 1$. Therefore $a$ is a root of the polynomial $x^5 +  x^4 + x^3 + x + 1$, which is irreducible over $GF(2)$. This implies $GF(2)[a] = GF(2^5)$, hence our coordinatizing field $GF(q)$ must contain $GF(2^5)$, from which we conclude $q = 2^{5k}$.  

The next lemma will help classify this hyperfocused arc. Let $T$ denote the absolute trace function.

\begin{lem}
\label{lem:Trace}
If $a \in GF(2^{5k})$ satisfying  $a^5 = a^4 + a^3 + a + 1$ then $T(a) = T(1)$.
\end{lem}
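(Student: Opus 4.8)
The plan is to rephrase the desired identity in terms of the Artin--Schreier map. Since $T(1) \in GF(2)$ and $T$ is $GF(2)$-linear, the claim $T(a) = T(1)$ is equivalent to $T(a+1) = 0$, and the additive version of Hilbert's Theorem~90 (equivalently, Artin--Schreier theory) says that $T(x) = 0$ if and only if $x = b^2 + b$ for some $b \in GF(2^{5k})$. So it suffices to exhibit a $b$ with $b^2 + b = a+1$.

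First I would observe that the hypothesis forces $a$ to be a root of $p(x) = x^5 + x^4 + x^3 + x + 1$, which is irreducible over $GF(2)$; hence $a$ generates the subfield $GF(2^5) \subseteq GF(2^{5k})$, and in particular we may look for $b$ among the polynomials in $a$ of degree at most $4$. Writing $b = \sum_{i=0}^{4} c_i a^i$ with $c_i \in GF(2)$, squaring, and reducing $a^5, a^6, a^7, a^8$ via the relation $a^5 = a^4 + a^3 + a + 1$ (e.g.\ $a^6 = a^3 + a^2 + 1$) turns the equation $b^2 + b = a+1$ into a small linear system over $GF(2)$ in the $c_i$. Solving it yields $b = a^3 + a$; one checks directly that $(a^3+a)^2 + (a^3+a) = a^6 + a^3 + a^2 + a = (a^3 + a^2 + 1) + a^3 + a^2 + a = a + 1$, so in fact no case analysis on $k$ is needed.

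With $b = a^3 + a \in GF(2^{5k})$ in hand, I would finish by computing $T(a+1) = T(b^2 + b) = T(b^2) + T(b) = 0$, using that $T(b^2) = T(b)$ because the absolute trace is $GF(2)$-linear and commutes with the Frobenius automorphism. Hence $T(a) = T(1)$. There is no deep obstacle here: the only things to get right are the reformulation (recognizing that $a+1$ should be expressed as $b^2 + b$) and the observation that a suitable $b$ already lives in $GF(2)[a] = GF(2^5)$, i.e.\ that $x^3 + x$ is an Artin--Schreier preimage of $x+1$ modulo $p(x)$; after that it is a routine reduction.

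An alternative route, avoiding Artin--Schreier, uses that the trace $GF(2^{5k}) \to GF(2)$ factors through $GF(2^5)$: since $a \in GF(2^5)$ one has $\mathrm{Tr}_{GF(2^{5k})/GF(2^5)}(a) = ka$, so for $k$ even both $T(a)$ and $T(1) = 5k \bmod 2$ vanish, while for $k$ odd $T(a) = \mathrm{Tr}_{GF(2^5)/GF(2)}(a)$, which equals the sum of the roots of $p$, namely the coefficient of $x^4$, which is $1 = T(1)$. I would present the Artin--Schreier argument as the main proof since it is uniform in $k$ and immediately exhibits the witness $b = a^3 + a$, which is also convenient for later use.
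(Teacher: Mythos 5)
Your proposal is correct, and your main argument is a genuinely different route from the paper's. The paper computes $T(a)$ through the minimal polynomial: since $x^5+x^4+x^3+x+1$ is irreducible, the absolute trace of $a$ is (up to the parity of the field degree over $GF(2^5)$) the sum of the conjugates of $a$, i.e.\ the coefficient of $x^4$, which is $1$; this gives $T(a)\equiv [GF(q):GF(2)] \equiv T(1) \pmod 2$ --- essentially the ``alternative route'' you sketch at the end, with the trace transitivity made explicit. Your main proof instead produces an explicit Artin--Schreier witness: from $a^5=a^4+a^3+a+1$ one gets $a^6=a^3+a^2+1$, hence $(a^3+a)^2+(a^3+a)=a+1$, so $T(a+1)=T\bigl((a^3+a)^2\bigr)+T(a^3+a)=0$ and $T(a)=T(1)$. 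This checks out (note you only need the trivial direction of additive Hilbert 90, namely $T(b^2+b)=0$, which is just Frobenius-invariance of the trace), it is uniform in $k$, and it does not even require the irreducibility of the quintic or the observation that $a$ lies in $GF(2^5)$ --- only the defining relation. What the paper's approach buys is that it directly exposes why the answer is governed by the coefficient of $x^4$ of the minimal polynomial, which generalizes immediately to other irreducible polynomials without having to hunt for a preimage $b$; what yours buys is a self-contained, verification-only computation with an explicit witness $b=a^3+a$ and no appeal to the trace-of-conjugates formula.
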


\begin{proof}
Since finite fields are separable extensions of their ground field, we use the formula $$T(a) = [GF(q):GF(2)] \sum\limits_{ \alpha \in R } \alpha,$$ where $R$ is the set of roots of the minimal polynomial of $a$. Since $x^5 +  x^4 + x^3 + x + 1$ is irreducible, it must be the minimal polynomial of $a$. The coefficient of $x^4$ is 1 in this polynomial, thus we have $\sum\limits_{ \alpha \in R } \alpha  = 1$. 
Therefore, $T(a) = [GF(q):GF(2)] = T(1)$, as desired. \qed
\end{proof}

It is well known that the polynomial $t^2 + t + a$ is irreducible if and only if $T(a) = 1$. Lemma \ref{lem:Trace} shows that the polynomial $t^2 + t + a$ is irreducible over $GF(2^{5k})$ if and only if $k$ is odd. Hence the line $[1,1,a]^T$ is external to $\mathcal{C}$ when $k$ is odd, and secant to $\mathcal{C}$ when $k$ is even. Thus our classification follows from Theorems \ref{thm:secant} and \ref{thm:external}.\

\begin{thm}
A 12-arc $\mathcal{K}$ is a hyperfocused arc in PG($2,q$) if and only if $q = 2^{5k}$ and $\mathcal{K}$ is a subset of a hyperconic including the nucleus. Specifically, the focus line is secant to the conic when $k$ is even so the non-nucleus points are projectively equivalent to a set of points determined by a subgroup of (GF($q)^*,\cdot$); and the focus line is exterior when $k$ is odd, so the non-nucleus points are projectively equivalent to a set of points determined by a subgroup of $\mathbb{Z}_{q+1}$.
\end{thm}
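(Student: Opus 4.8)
The plan is to prove both implications by assembling the pieces developed above. For the forward direction, start from a hyperfocused $12$-arc $\mathcal{K}$ in PG$(2,q)$; by the Cherowitzo--Holder parity result we may take $q = 2^h$, and the focus set of $\mathcal{K}$ induces a $1$-factorization $\mathcal{F}$ of $K_{12}$, since each of its $11$ points lies on $6$ pairwise disjoint secants and the $11$ resulting $1$-factors cover all $\binom{12}{2}=66$ edges. First I would pass $\mathcal{F}$ through the two filters: Lemma~\ref{lem:c4} leaves only the $253$ $1$-factorizations surviving the $C_4\to K_4$ test, and Lemma~\ref{lem:2k4} leaves only the $2$ displayed above. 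For the first of these, fixing the frame $A=(1,0,0)$, $B=(0,1,0)$, $\mathbf 0=(0,0,1)$, $\mathbf 3=(1,1,1)$ (a quadrangle, since the secant $\mathbf 0\mathbf 3$ meets the focus line at a point other than $A$ or $B$) so that the focus line is $[0,0,1]^T$, and then forcing the coordinates of $C$, of the Fano points $\mathbf 1,\mathbf 2$, and propagating along secants through the Fano subplanes available in PG$(2,2^h)$, I would reach $\mathbf H=\mathbf F$, a contradiction; hence this $1$-factorization is not embeddable.

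For the surviving $1$-factorization I would run the same style of argument to extract structure rather than a contradiction. The displayed hexagons with the converse of Pascal's theorem force $\mathbf 1,\dots,\mathbf{11}$ onto a conic $\mathcal C$, while the hexagon $015327$ shows $\mathbf 0\notin\mathcal C$; two Desargues configurations, each combined with a diagonal-line description of a quadrangle, exhibit $(\mathbf{01})$ and $(\mathbf{02})$ as tangents of $\mathcal C$, so $\mathbf 0$ is the nucleus and $\mathcal K\subseteq\mathcal C\cup\{\mathbf 0\}$, a hyperconic containing its nucleus. Coordinatizing $\mathcal C$ as $x^2=yz$ with $\mathbf 4=(a,a^2,1)$ and computing $\mathbf 8$ in two ways (as $(F1)\cap\mathcal C$ and as $(F1)\cap(I0)$) forces $a^5=a^4+a^3+a+1$; since $x^5+x^4+x^3+x+1$ is irreducible over $GF(2)$, $GF(2^5)\subseteq GF(q)$ and $q=2^{5k}$. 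Lemma~\ref{lem:Trace} then gives $T(a)=T(1)\equiv 5k\equiv k\pmod 2$, so by the criterion ``$t^2+t+a$ irreducible $\iff T(a)=1$'' the focus line $[1,1,a]^T$ is external to $\mathcal C$ when $k$ is odd and secant when $k$ is even; Theorems~\ref{thm:external} and~\ref{thm:secant} then describe $\mathcal K\setminus\{\mathbf 0\}$ as the points of a subgroup of $\mathbb Z_{q+1}$, resp.\ of $(GF(q)^*,\cdot)$.

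For the converse, I would use that the multiplicative order of $2$ modulo $11$ is $10$, so when $q=2^{5k}$ we have $11\mid q-1$ exactly for $k$ even and $11\mid q+1$ exactly for $k$ odd. Choosing an order-$11$ subgroup of $(GF(q)^*,\cdot)$ in the first case and of $\mathbb Z_{q+1}$ in the second, and invoking the ``if'' direction of Theorem~\ref{thm:secant}, resp.\ Theorem~\ref{thm:external}, produces $11$ non-nucleus points on a conic whose union with the nucleus is a hyperfocused $12$-arc of the asserted form, completing the equivalence. The step I expect to be the main obstacle is the reduction to two $1$-factorizations, i.e.\ the exhaustive search over all $526{,}915{,}620$ $1$-factorizations of $K_{12}$; its only non-mechanical ingredients are the necessity of Lemmas~\ref{lem:c4} and~\ref{lem:2k4} (already proved) and the two long but routine coordinate chases for the survivors, which must be carried through without a slip.
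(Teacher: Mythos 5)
Your proposal follows essentially the same route as the paper: filter the $1$-factorizations of $K_{12}$ by Lemmas~\ref{lem:c4} and~\ref{lem:2k4}, kill the first survivor by the coordinate chase ending in $H=F$, and for the second use Pascal's theorem, the nucleus argument, the relation $a^5=a^4+a^3+a+1$ forcing $q=2^{5k}$, and Lemma~\ref{lem:Trace} with Theorems~\ref{thm:secant} and~\ref{thm:external}. Your explicit treatment of the converse via $2^5\equiv -1\pmod{11}$ (so $11\mid q-1$ for $k$ even and $11\mid q+1$ for $k$ odd) is a correct detail the paper leaves implicit, but it does not change the overall argument.
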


\section*{Acknowledgements}

This research started at the Rocky Mountain--Great Plains Graduate Research Workshop in Combinatorics that was held at the University of Colorado Denver and the University of Denver in summer 2014, and we thank them for their hospitality.
We thank Petteri Kaski and Patric {\"O}sterg{\aa}rd for providing us with the data that allowed us to complete this classification.
We also thank William Cherowitzo for his many helpful remarks.

\end{document}